\documentclass[12pt, twoside,leqno]{article}
\usepackage{amsmath,amsthm}
\usepackage{amssymb}

\newtheorem{thm}{Theorem}[section]
\newtheorem{lem}[thm]{Lemma}
\newtheorem{conj}[thm]{Conjecture}
\newtheorem{prop}[thm]{Proposition}

\frenchspacing

\textwidth=13.5cm
\textheight=23cm
\parindent=16pt
\oddsidemargin=-0.5cm
\evensidemargin=-0.5cm
\topmargin=-0.5cm

\def\({\left(}
\def\){\right)}
\def\[{\left[}
\def\]{\right]}
\def\<{\langle}
\def\>{\rangle}

\def\Card{\operatorname{Card}}

\begin{document}

\baselineskip=17pt

\title{On the local structure of the set of values of Euler's $\varphi$ function}

\author{Jean-Marc Deshouillers\\
Institut de Math\'{e}matiques de Bordeaux\\
Universit\'{e} de Bordeaux, CNRS, Bordeaux INP\\
33400 Talence, France\\
E-mail: jean-marc.deshouillers@math.u-bordeaux.fr
\and
Pramod Eyyunni\\
IIT Gandhinagar, Palaj\\
Gandhinagar 382355,  Gujarat, India\\
E-mail: pramodeyy@gmail.com
\and
Sanoli Gun\\
Institute of Mathematical Sciences, HBNI\\
C.I.T Campus, Taramani\\
Chennai 600113, Tamil Nadu, India\\
E-mail: sanoli@imsc.res.in}

\date{}

\maketitle

\renewcommand{\thefootnote}{}

\footnote{2020 \emph{Mathematics Subject Classification}: Primary 11B83; Secondary 11B05,  11N32, 11N64.}

\footnote{\emph{Key words and phrases}: Values of Euler's function, Dickson's Conjecture, Banach density.}

\renewcommand{\thefootnote}{\arabic{footnote}}
\setcounter{footnote}{0}

\begin{abstract}
Assuming the validity of Dickson's conjecture, we show that the set 
$\mathcal{V}$ of values of the Euler's totient function $\varphi$ contains arbitrarily 
large arithmetic progressions with common difference 4. This leads to the question 
of proving unconditionally that this set $\mathcal{V}$ has a positive 
upper Banach density.
\end{abstract}

\section{Introduction}

In \cite{DEP1}, Mithun Kumar Das, Bhuwanesh Rao Patil and the second named author of this paper have 
studied the local behaviour of some sequences of integers associated to Euler's totient function $\varphi$. 
One of the tools they use is the (upper) Banach density, also called upper uniform density. For a sequence 
of integers $\mathcal{A}$, its Banach density is defined by
\begin{equation}\label{defdelta}
\delta^*(\mathcal{A}) = \lim_{H \rightarrow \infty} \limsup_{x \rightarrow \infty} \frac{1}{H} 
\operatorname{Card} (\mathcal{A}\cap (x, x+H]).
\end{equation}

They claim that the sequence $\mathcal{V}$ of values of the $\varphi$  function over the 
positive integers has Banach density $0$ and notice in \cite{DEP2} that their argument 
is not complete. The main aim of this paper is to show that, assuming a standard 
conjecture of Dickson, the Banach density of $\mathcal{V}$ is $1/4$.\\

Let us first recall Dickson's conjecture \cite{DIC} which is a predecessor of the Hardy-Littlewood 
conjecture on prime $k$-tuples and also Schinzel's Hypothesis H.
\begin{conj}[Dickson's conjecture]\label{dickson}
Let $s$ be a positive integer and  $F_1, F_2,\ldots, F_s$ be $s$ linear polynomials with integral 
coefficients and positive leading coefficient such that their product has no fixed prime 
divisor \footnote{We say that a prime number $p$ is a \emph{fixed prime divisor} of a polynomial 
$G$ if we have: $\forall t \in \mathbb{Z} : p | G(t)$.}. Then, there exist infinitely many positive 
integers $t$ such that $F_1(t), F_2(t),\ldots, F_s(t)$ are all primes.
\end{conj}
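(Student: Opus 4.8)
\medskip

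\noindent The plan is to deduce from Conjecture~\ref{dickson} that for every $L$ the set $\mathcal V$ contains an arithmetic progression of length $L$ and common difference $4$. First I would reduce this to producing, for each $L$, an integer $M$ with $4M,4(M+1),\dots,4(M+L-1)\in\mathcal V$, equivalently a block of $L$ consecutive integers inside $W:=\{k\ge1:4k\in\mathcal V\}$; such a block is itself the required progression, and moreover it forces $\delta^*(\mathcal V)\ge1/4$ (for each fixed large $H$ one gets, arbitrarily far out, an interval of length $H$ containing $\gtrsim H/4$ elements of $\mathcal V$), while the complementary bound $\delta^*(\mathcal V)\le1/4$ is elementary --- apart from $1$, the totient values not divisible by $4$ are exactly those of the form $p^{a-1}(p-1)$ with $p\equiv3\pmod4$, a set of Banach density $0$ by Brun--Titchmarsh --- so that $\delta^*(\mathcal V)=1/4$ would follow as well.

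\medskip

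The engine for placing integers in $W$ is the remark that if $m$ is fixed, $e:=\varphi(m)$ divides $4k$, and $q:=4k/e+1$ is a prime coprime to $m$, then $4k=\varphi(m)(q-1)=\varphi(mq)$, so $k\in W$. Taking $m=2^{b+1}$, so $e=2^{b}$, gives a family of ``recipes'': for $b=0,1$ the bare conditions ``$4k+1$ prime'' and ``$2k+1$ prime'', and for $b\ge2$ the condition ``$2^{b-1}\mid k$ and $k/2^{b-2}+1$ prime'', the side congruence being included exactly so that the prime $k/2^{b-2}+1$ comes out odd. Recipes from an $m$ with $\varphi(m)$ not a power of $2$ are also available (for instance $\varphi(7)=6$ yields ``$3\mid k$ and $2k/3+1$ prime'') and can be brought in to relieve a stubborn prime; I expect these, together with the power-of-$2$ ones, to suffice. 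Thus $W$ is a union of many shifted sets of primes, and a given residue class can be ``covered'' in many ways.

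\medskip

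For a given $L$, the construction would be: assign a recipe, encoded by an exponent $b_j\ge0$, to each $j\in\{0,\dots,L-1\}$; impose on $M$ the associated side congruences (which are made mutually compatible by using each recipe on a single residue class modulo its $2$-power) together with a chosen residue $M\equiv M_*\pmod R$, where $R=2^{A}\prod_{2<p\le L}p$ with $A$ large; this confines $M$ to one class modulo some $D$, and writing $M=Dt+M_*$ turns recipe $j$ into an explicit linear polynomial $G_j(t)\in\mathbb Z[t]$ with positive leading coefficient whose primality forces $4(M+j)\in\mathcal V$. If $G_0\cdots G_{L-1}$ has no fixed prime divisor, Conjecture~\ref{dickson} yields infinitely many $t$ with $G_0(t),\dots,G_{L-1}(t)$ simultaneously prime, and each such $t$ produces the progression $4(Dt+M_*),4(Dt+M_*)+4,\dots,4(Dt+M_*)+4(L-1)$ of elements of $\mathcal V$, finishing the proof.

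\medskip

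The hard part will be choosing the exponents $b_j$ and the residue $M_*$ so that $G_0\cdots G_{L-1}$ has no fixed prime divisor. For $p>L$ there is nothing to check, since $G_0\cdots G_{L-1}$ has at most $L<p$ roots modulo $p$. For $p\le L$, since $p\mid D$ each $G_j$ is constant modulo $p$, and a short computation gives $G_j\equiv2^{a_j}(M_*+j)+1\pmod p$ with $a_j:=2-b_j\le2$; one must arrange that none of these constants vanishes. Because the attainable multipliers $\{2^{a}:a\le2\}$ run through the entire cyclic subgroup $\langle2\rangle\le(\mathbb Z/p)^{\times}$, for each $j$ with $p\nmid M_*+j$ the forbidden exponents $a_j$ form a single residue class modulo $\operatorname{ord}_p(2)$, while for $j$ with $p\mid M_*+j$ there is no constraint. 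The delicate step is to make one choice of every $a_j$ that is admissible for all $p\le L$ simultaneously: I would first pre-tune $M_*\bmod p$ for the small primes --- arranging, for each troublesome $j$, that $p\mid M_*+j$ or that $-(M_*+j)^{-1}\notin\langle2\rangle$ --- and then argue that the finitely many forbidden arithmetic progressions of exponents (one per prime $p\le L$, with period dividing $\operatorname{ord}_p(2)$) never cover $\mathbb Z$, switching to an odd-prime recipe for any $j$ where they threaten to. The prime $p=2$ gives no trouble, since the recipes above return odd values by construction. This simultaneous non-covering argument, governed by the multiplicative orders $\operatorname{ord}_p(2)$, is where I expect the real work to lie; the remainder is bookkeeping together with the single appeal to Conjecture~\ref{dickson}.
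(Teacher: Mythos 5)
You have set out to prove the wrong thing, though understandably so: the statement you were given is Dickson's conjecture itself, which the paper does not prove (it is an open conjecture, assumed throughout), and your text does not prove it either --- it is a sketch of the deduction of Theorems \ref{main} and \ref{strongmain} \emph{from} the conjecture. Reviewing it as such: your architecture is the same as the paper's (write $M+4h=\varphi(\nu_h)(q_h-1)=\varphi(\nu_h q_h)$ with $q_h$ prime, encode the $q_h$ as linear polynomials in $t$ via congruence conditions on $M$, check the no-fixed-prime-divisor hypothesis, and invoke Dickson once), but your choice of multipliers is genuinely different, and it is exactly there that the argument breaks.

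The paper takes $\nu_h=2p_h+1$ or $(2p_h+1)^4$ for enormous Sophie Germain primes $p_h$, so that the multipliers $n_h/2$ are huge primes (or a prime times a cube) which Proposition \ref{familyp} places, by CRT and Lemma \ref{ginap}, in residue classes that kill every dangerous congruence modulo the small primes and modulo the $p_k(2p_k+1)$ by fiat; the single prime $3$ is handled by the fact that Germain primes exceed $2\pmod 3$ while $p(2p+1)^3\equiv 1\pmod 3$, which is why $n_h$ alternates between its two forms according to $h\bmod 3$. Your multipliers $\varphi(2^{b+1})=2^{b}$ carry no such freedom, and the resulting small-prime analysis --- choosing exponents $a_j\le 2$ avoiding, for each odd $p\le L$, one forbidden class modulo $\operatorname{ord}_p(2)$ --- is not ``bookkeeping'' but the whole difficulty, and you leave it open. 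Concretely: (i) the $2$-adic compatibility you impose forces the indices $j$ with $M+j$ odd (half of them) to use $b_j\le 2$, hence only three admissible exponents, so three bad primes already strand such a $j$; (ii) pre-tuning $M_*\bmod p$ cannot rescue all $j$ at once, since for each odd $p$ roughly a proportion $\operatorname{ord}_p(2)/(p-1)$ of the residues $j$ satisfy $-(M_*+j)^{-1}\in\langle 2\rangle$ whatever $M_*$ is; (iii) a density argument against covering is unavailable, as even the convergence of $\sum_p 1/\operatorname{ord}_p(2)$ is not known. The ``odd-prime recipes'' you invoke as a fallback are precisely what would have to be constructed, and constructing them coherently is what the paper's Germain-prime machinery does. (A secondary, smaller issue: your ``elementary'' upper bound $\delta^*(\mathcal V)\le 1/4$ handles the values $p-1$ with $p\equiv 3\pmod 4$ correctly via Brun--Titchmarsh, but the values $p^{a-1}(p-1)$ with $a\ge 2$ have natural density $0$ without obviously having upper Banach density $0$, which is why the paper instead quotes Ford--Konyagin--Pomerance.)
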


Our main result is the following;
\begin{thm}\label{main}
Assuming the validity of Dickson's conjecture, the set of values of the Euler's totient function 
$\varphi$ has upper Banach density $1/4$.
\end{thm}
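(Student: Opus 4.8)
The plan is to establish the two bounds $\delta^*(\mathcal V)\le 1/4$ and $\delta^*(\mathcal V)\ge 1/4$ separately, Dickson's conjecture being needed only for the second. For the upper bound, recall that $\varphi(n)$ is odd only for $n\in\{1,2\}$, and that, inspecting the $2$-adic valuation of $\varphi(n)$, one sees that $\varphi(n)\equiv 2\pmod 4$ forces $n=4$, $n=p^{a}$, or $n=2p^{a}$ with $p$ a prime $\equiv 3\pmod 4$. Hence the values taken by $\varphi$ that are not divisible by $4$ are $1$, $2$, the numbers $p-1$ with $p$ a prime $\equiv 3\pmod 4$, and the numbers $p^{a-1}(p-1)$ with $a\ge2$ and $p$ a prime $\equiv 3\pmod 4$; this set has natural density $0$ (the third family has only $O(\sqrt x)$ members up to $x$, the second $o(x)$ by the prime number theorem). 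Thus $\mathcal V$ agrees with a subset of $4\mathbb Z$ outside a set of density $0$, and since $\delta^*$ is subadditive with $\delta^*(4\mathbb Z)=1/4$ we obtain $\delta^*(\mathcal V)\le 1/4$.

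For the lower bound it suffices to prove that for every $k\ge1$ there are arbitrarily large integers $a$ with $a,a+4,\dots,a+4(k-1)$ all in $\mathcal V$ --- precisely the statement announced in the abstract. Granting it, taking $H=4k$ gives $\limsup_{x\to\infty}\tfrac1H\operatorname{Card}\big(\mathcal V\cap(x,x+H]\big)\ge k/(4k)=1/4$, and since the limit in \eqref{defdelta} exists, letting $k\to\infty$ yields $\delta^*(\mathcal V)\ge1/4$; combined with the upper bound this proves the theorem.

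To produce such a progression I would realise each of its terms as a totient value of a product of two primes. Writing the terms as $4(M+j)$, $j=0,\dots,k-1$, the identity $\varphi\big((2\pi_j+1)(2b_j+1)\big)=(2\pi_j)(2b_j)=4\pi_j b_j$ holds whenever $2\pi_j+1$ and $2b_j+1$ are distinct primes. Fix distinct odd primes $\pi_0<\cdots<\pi_{k-1}$, each exceeding $k$, with every $2\pi_j+1$ prime (such primes exist, and there are infinitely many, by Dickson's conjecture for the pair $(t,\,2t+1)$). Let $Q$ be the product of the $\pi_j$ together with all primes $\le k$, choose $r$ with $r\equiv-j\pmod{\pi_j}$ for each $j$, and put $M=Qt+r$. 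Since $\pi_j\mid M+j$, the quantity $b_j(t):=(M+j)/\pi_j$ is an integer polynomial, linear in $t$, so $q_j(t):=2b_j(t)+1$ is a linear polynomial in $t$ with positive leading coefficient, and $q_j(t)\ne 2\pi_j+1$ once $t$ is large. It then suffices to find infinitely many $t$ making $q_0(t),\dots,q_{k-1}(t)$ simultaneously prime, which Dickson's conjecture provides once $\prod_j q_j(t)$ is shown to have no fixed prime divisor.

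That last verification is the main obstacle. A prime $p\notin\{\pi_0,\dots,\pi_{k-1}\}$ with $p>k$ causes no trouble, since each $q_j$ is then non-constant modulo $p$ with a single root, so some residue class of $t$ modulo $p$ avoids them all. For $p=\pi_m$, and for the finitely many primes $p\le k$, each $q_j$ is instead \emph{constant} modulo $p$, and one must ensure that none of these constants vanishes and that the finitely many roots do not exhaust $\mathbb Z/p\mathbb Z$. This comes down to finitely many forbidden congruences, on the $\pi_j$ (modulo $\prod_{p\le k}p$ and modulo one another) and on $r$ (modulo the small primes), all of which can be respected --- using Dickson once more to select the $\pi_j$ in prescribed residue classes while keeping $2\pi_j+1$ prime. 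Once this bookkeeping is carried out, Dickson's conjecture yields, for each $k$, an arithmetic progression of length $k$ with common difference $4$ contained in $\mathcal V$, and the theorem follows.
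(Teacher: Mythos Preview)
Your upper-bound argument conflates natural density with upper Banach density. The facts you cite (``$O(\sqrt x)$ members up to $x$'' for the higher prime-power contribution, ``$o(x)$ by the prime number theorem'' for $\{p-1\}$) establish only that the set $S$ of totient values $\not\equiv 0\pmod 4$ has \emph{natural} density $0$; this does not yield $\delta^*(S)=0$, since sets of natural density $0$ can have Banach density $1$. For the shifted primes a sieve bound (Brun--Titchmarsh) would repair this, but for $\{p^{a-1}(p-1):a\ge 2\}$ a uniform short-interval bound does not follow from what you wrote. The paper sidesteps the issue entirely by invoking a theorem of Ford, Konyagin and Pomerance: for every $\varepsilon>0$ there is a modulus $4m$ such that at least $(1-\varepsilon)m$ of the residue classes $a\equiv 2\pmod 4$ modulo $4m$ are totient-free; combined with the evenness of $\varphi(n)$ for $n\ge 3$ this gives $\delta^*(\mathcal V)\le (1+\varepsilon)/4$ directly.

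The lower bound has a harder obstacle: your product $\prod_j q_j$ \emph{always} has $3$ as a fixed prime divisor once $k\ge 3$, so Dickson's conjecture does not apply. Every Sophie Germain prime $\pi_j>3$ is forced to satisfy $\pi_j\equiv 2\pmod 3$ (otherwise $3\mid 2\pi_j+1$), and since $3\mid Q$ one computes that $q_j(t)=2(Qt+r+j)/\pi_j+1\equiv r+j+1\pmod 3$ for all $t$. As $j$ ranges over any three consecutive integers this hits $0\pmod 3$, so some $q_j$ is identically divisible by $3$; no choice of the $\pi_j$ ``in prescribed residue classes'' can help, because their class modulo $3$ is not free. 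The paper confronts exactly this obstruction by letting the auxiliary preimage $\nu_h$ be either $2p_h+1$ or $(2p_h+1)^4$, taking the fourth power precisely when $h\equiv 2\pmod 3$; then $n_h=\varphi(\nu_h)$ equals $2p_h\equiv 1\pmod 3$ or $2p_h(2p_h+1)^3\equiv 2\pmod 3$ accordingly, and one checks that $F_h(t)\equiv h/n_h+1\pmod 3$ is $1,2,2$ in the three cases $h\equiv 0,1,2\pmod 3$, never $0$.
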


The lower bound  $\delta^*(\mathcal{V}) \ge 1/4$ is a direct consequence of Theorem \ref{strongmain} which is stated below and proved in Section \ref{two}. The upper bound  $\delta^*(\mathcal{V}) \le 1/4$
is easily derived, in Section \ref{three}, from a result of Ford-Konyagin-Pomerance \cite{FKP}. \\

\begin{thm}\label{strongmain}
Assume the validity of Dickson's conjecture. For any 
positive integer $H$, there exist positive integers 
$M, m_1, \ldots, m_H$ such that
\begin{equation}\label{estrong}
\forall h \in [1, H]  \colon ~~~
\varphi(m_{h}) = M+ 4h.
\end{equation}
\end{thm}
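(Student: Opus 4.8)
The plan is to realize each value $M+4h$ as $\varphi$ of a number of the form $2^{a_h} p_h$ or, more flexibly, as $\varphi$ of a product of the shape $q_h r_h$ where $q_h, r_h$ are primes, choosing the arithmetic so that all of $p_1,\dots,p_H$ (resp. all the $q_h, r_h$) are simultaneously prime by an appeal to Dickson's conjecture. The cleanest route: fix a large even integer $N$ (to be thought of as roughly $M/2$ or $M/4$), and try to arrange $\varphi$ of something to equal $N+2h$ for $h\in[1,H]$, which after doubling gives the common difference $4$ demanded in \eqref{estrong}. If one uses $m_h = 2 p_h$ with $p_h$ an odd prime, then $\varphi(m_h)=p_h-1$, so one would need $p_h = N+2h+1$; but Dickson applied to the tuple $F_h(t)=t+2h+1$ (or, scaling, to a suitable common parameter $t$) does not directly give a single $N$ that works — one needs the shift to come from one free variable. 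So instead I would set $p_h = F_h(t)$ for a single integer parameter $t$, with the $F_h$ chosen so that $\varphi(m_h)$ comes out to be $M+4h$ with the \emph{same} $M$ for all $h$.

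Concretely, here is the construction I would push through. Let $d$ be a highly composite modulus (a product of the first several primes, or a factorial) chosen large compared to $H$, and look for a prime $q$ with $q\equiv 1\pmod d$. Write $q-1 = d\cdot t$. Now for each $h\in[1,H]$ consider the number $d\cdot t + 4h$; one wants these to be totient values with a controlled offset. The key observation is that $\varphi$ is multiplicative and $\varphi(p)=p-1$, $\varphi(p^2)=p(p-1)$, etc., so linear shifts in the output can be engineered from linear shifts in prime inputs. I would therefore look for primes $p_1,\dots,p_H$ of the form $p_h = a t + b_h$ (common leading coefficient $a$, varying constants $b_h$) together with an auxiliary fixed prime $\pi$ (or a fixed prime power) so that $m_h := \pi\cdot p_h$ satisfies $\varphi(m_h) = (\pi-1)(p_h-1) = (\pi-1)(at+b_h-1)$; choosing $b_h$ in arithmetic progression with common difference $4/(\pi-1)$ forces $\varphi(m_h) = M + 4h$ with $M = (\pi-1)(at+b_1-1)-4$. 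For this to make sense one needs $\pi-1 \mid 4$, i.e. $\pi\in\{2,3,5\}$; taking $\pi=5$ (so $\pi-1=4$) gives $b_h = b_0 + h$ and $M = 4(at+b_0) $, which is the slickest choice. One then only has to ensure the product $\prod_{h=1}^H (at+b_0+h)$ (times the fixed factor accounting for $t\equiv\text{something}$) has no fixed prime divisor, apply Dickson's conjecture to get infinitely many $t$ making all $H$ linear forms $at+b_0+h$ simultaneously prime (and $\ne 5$), set $m_h = 5\,p_h$, and read off $M$.

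The main obstacle, and the step deserving genuine care, is the \emph{no fixed prime divisor} condition: the forms $t+b_0+1,\dots,t+b_0+H$ (after normalizing $a=1$, which one can do since only the residue of the leading coefficient matters) hit every residue class mod any prime $p\le H$, so their product is always divisible by small primes — exactly the obstruction that makes naive prime $k$-tuples fail. The fix is the standard one: do not take $H$ consecutive integers as the shifts. Instead choose the offsets $c_1<\dots<c_H$ (so $p_h = t + c_h$, $\varphi(m_h)=4(t+c_h-1)$, and $M+4h = 4(t+c_h-1)$ forces $c_h - c_{h-1} = 1$ — which reintroduces the problem). The genuine resolution is to \emph{not} use a single fixed prime $\pi$ but allow $m_h$ itself to vary in a more clever shape, e.g. $m_h = 2^{e_h} p_h$ or $m_h = p_h q_h$, so that the required input primes do not have to lie in a fixed-length interval; or, most robustly, to use the $2$-adic freedom: since $\varphi(2^k m) = 2^{k-1}\varphi(m)$, one can pre-multiply by a power of $2$ to turn an arithmetic progression of \emph{outputs} with difference $4$ into one where the needed primes are spread out. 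I expect the paper handles this by a careful choice of the linear system together with a CRT argument (choosing $t$ in a fixed residue class mod $\prod_{p\le H}p$) guaranteeing no fixed prime divisor, at the cost of a larger but still admissible family of linear forms; verifying admissibility of that explicit family is the technical heart, and everything after the invocation of Conjecture~\ref{dickson} is bookkeeping with multiplicativity of $\varphi$.
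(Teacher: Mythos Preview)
Your proposal correctly isolates the main difficulty but does not actually overcome it. Once you commit to $m_h = 5\,p_h$ with a \emph{single} fixed cofactor $\pi=5$, the relation $\varphi(m_h)=M+4h$ forces $p_h = M/4 + h + 1$, so the $H$ linear forms are $t+1,\dots,t+H$: consecutive integers. For every prime $p\le H$ this tuple is genuinely inadmissible, and no CRT restriction on $t$ can repair this, since the product $\prod_{h}(t+h)$ is divisible by $p$ for \emph{every} integer $t$. Your final paragraph acknowledges this and gestures at fixes (powers of $2$, products $p_h q_h$, ``a careful choice of the linear system''), but none is carried out; the claim that ``verifying admissibility of that explicit family is the technical heart'' is exactly right, and that heart is missing.

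The paper's resolution is precisely to abandon the fixed cofactor. It writes $M+4h = n_h(q_h-1)$ with a \emph{different} even $n_h$ for each $h$, chosen so that (i) each $n_h$ is itself a totient value $n_h=\varphi(\nu_h)$, and (ii) the linear forms $F_h(t)=(Ut+r-1+4h)/n_h+1$ have \emph{different} leading coefficients $U/n_h$ and constant terms that can be tuned via CRT to kill every potential fixed prime divisor. Condition (i) is what makes Sophie Germain primes enter: taking $n_h=2p_h$ (or $2p_h(2p_h+1)^3$) with $2p_h+1$ prime gives $n_h=\varphi(2p_h+1)$ (or $\varphi((2p_h+1)^4)$). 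The bulk of the work is a rather delicate inductive construction of the $p_h$ (Proposition~\ref{familyp}) ensuring all the coprimality conditions needed for Step~4. Your sketch does not contain this idea of varying $n_h$, which is the step that converts the inadmissible consecutive tuple into an admissible one.
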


\noindent \textbf{Remark 1.}\;
It would be interesting to prove unconditionally that the Banach density of 
$\mathcal{V}$ is positive.\\

\section{Proof of Theorem \ref{strongmain}}\label{two}

 The letters $p$ and $q$, with or without index or subscript, 
 always denote prime numbers.\\

\textbf{Step 1}. Germain primes in arithmetic progressions\\

Although our subsequent construction will use Germain primes 
\footnote{ A prime number $p$ is said to be a (Sophie) Germain prime if 
$2p+1 $ is also a prime number.},
it is possible to avoid them and also the Dickson conjecture at this stage, 
but their use will  make our life easier.
\begin{lem}\label{ginap}
Assume the validity of Dickson's conjecture. Let $a$ be a positive integer and $b$ be 
an integer such that both $b$ and $2b+1$ are coprime to~$a$. The arithmetic 
progression with difference $a$ and first term $b$ contains infinitely many Germain primes.
\end{lem}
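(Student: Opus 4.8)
The plan is to apply Dickson's conjecture to the two linear polynomials $F_1(t) = at + b$ and $F_2(t) = 2at + 2b + 1$. These have integral coefficients and positive leading coefficients $a$ and $2a$, and they satisfy $F_2(t) = 2F_1(t) + 1$. Hence, whenever $t$ is such that $F_1(t)$ and $F_2(t)$ are both prime, the number $p := F_1(t) = at+b$ is a prime in the arithmetic progression with difference $a$ and first term $b$, and $2p+1 = F_2(t)$ is prime as well, so $p$ is a Germain prime. Since Dickson's conjecture, once applicable, produces infinitely many such $t$, this immediately gives infinitely many Germain primes in the progression. So everything reduces to checking the hypothesis of the conjecture: that $F_1 F_2$ has no fixed prime divisor.

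To verify this, first I would dispose of the prime $2$: the values of $F_2$ are all odd, so if $2$ were a fixed prime divisor of $F_1 F_2$ it would have to divide $F_1(t) = at+b$ for every $t$; taking $t = 0$ and $t = 1$ then forces $2 \mid b$ and $2 \mid a$, contradicting $\gcd(a,b) = 1$ (which follows from $b$ being coprime to $a$). For an odd prime $p$ I would split into two cases. If $p \mid a$, then $F_1(t) \equiv b \pmod p$ and $F_2(t) \equiv 2b+1 \pmod p$ for all $t$; since by hypothesis both $b$ and $2b+1$ are coprime to $a$, hence to $p$, the prime $p$ divides neither value, so it is not a fixed divisor. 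If $p \nmid a$, then also $p \nmid 2a$, so each of the congruences $F_1(t) \equiv 0 \pmod p$ and $F_2(t) \equiv 0 \pmod p$ has exactly one solution modulo $p$; as $p \geq 3$, there is a residue class modulo $p$ avoiding both solutions, and for $t$ in that class $p$ divides neither $F_1(t)$ nor $F_2(t)$. This exhausts all primes, so $F_1 F_2$ has no fixed prime divisor, Dickson's conjecture applies, and the lemma follows.

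The only genuinely delicate point — and the reason the hypothesis demands that both $b$ and $2b+1$ be coprime to $a$, rather than merely $\gcd(a,b) = 1$ — is the case $p \mid a$ in the argument above: if some prime divisor of $a$ also divided $2b+1$, it would be a fixed prime divisor of $F_2$, Dickson's conjecture would not apply, and in fact the progression could fail to contain any Germain prime. Everything else is routine bookkeeping.
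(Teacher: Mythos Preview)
Your proof is correct and follows essentially the same approach as the paper: apply Dickson's conjecture to the pair $F_1(t)=at+b$, $F_2(t)=2at+2b+1$ and verify that their product has no fixed prime divisor via the same three-way case split ($p=2$; odd $p\mid a$; odd $p\nmid a$). The only differences are cosmetic---you make explicit that $F_2$ is always odd in the $p=2$ case, and you count roots of the two linear factors separately rather than of their product---but the logic is identical.
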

\begin{proof}
We show that the polynomial $G$ defined by 
$$
G(t)=(at+b)(2at+2b+1)
$$ 
has no fixed prime divisor. \\
First, $2$ is not a fixed divisor: otherwise, $a = (a\times 1+b)-(a\times 0+b)$ is even and 
then $b$ is even, a contradiction.\\
Next, if $p>2$ does not divide $a$, then the polynomial $G$ modulo $p$ is of degree two:
 it has at most two roots and cannot have $p$ roots.\\
Finally, if $p>2$ divides $a$, then the polynomial $G$ modulo $p$ is the constant $b(2b+1)$, 
which is coprime to $p$, otherwise we would have either $\gcd(a, b)>1$ or $\gcd(a, 2b+1)>1$, 
a contradiction to our hypothesis.

\smallskip
By Dickson's conjecture, there exist infinitely many $t$ such that $at+b$ is prime, 
as well as $2at+2b+1 = 2(at+b)+1.$
\end{proof}

\textbf{Step 2}. Construction of a suitable family of primes.\\

The proof will make use of a family of Germain primes satisfying 
some congruences and size properties, the existence of which is 
asserted by the following proposition.

\begin{prop}\label{familyp}
Let $H \ge 5$. Assuming Dickson's conjecture, one can find $H$ \textit{Sophie Germain primes} 
$p_1, p_2, \ldots, p_H$  such that 
\begin{equation}\label{orderp}
 p_1 > 4H+1\;  \text{ and for any $h$ in } [1, H-1]\; \colon \; p_{h+1} > 3p_h(2p_h+1)^3.
\end{equation}
Further, for all $h, k \in [1, H]$ with $h \neq k$ and for all $p \in [5, H]$, these
\textit{Sophie Germain primes} satisfy the following relations;
\begin{eqnarray}
\label{modpiH1}
\gcd(2 h + p_h, ~p) &=& 1, \phantom{mm}\\
\label{modpiH2}
\gcd\left(2h + p_h(2p_h+1)^3, ~p\right) &=&1, \phantom{mm}\\
\label{deux}
 \gcd\left(p_h -2(k-h), ~p_k(2p_k+1)\right) &=&1, \phantom{mm}\\
\label{trois}
\text{ and   } 
\gcd\left( p_h(2p_h + 1)^3- 2(k-h), ~p_k(2p_k+1) \right)&=&1. \phantom{mm}
\end{eqnarray}
\end{prop}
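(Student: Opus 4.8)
The plan is to construct the primes $p_1, \dots, p_H$ one at a time, by induction on $h$, using Lemma \ref{ginap} at each step to produce a Sophie Germain prime lying in a carefully chosen arithmetic progression. The arithmetic progression at stage $h$ will be designed so that (a) its common difference is the product $P := \prod_{5 \le p \le H} p$ of all primes in $[5,H]$ (this is an empty product equal to $1$ when $H < 5$, but here $H \ge 5$), so that congruence conditions modulo each such $p$ can be imposed, and (b) all four gcd conditions \eqref{modpiH1}--\eqref{trois} are arranged. The size condition \eqref{orderp} costs nothing extra: Lemma \ref{ginap} furnishes \emph{infinitely many} Germain primes in the chosen progression, so once the residue class is fixed we simply pick $p_h$ large enough that $p_h > 4H+1$ and $p_h > 3p_{h-1}(2p_{h-1}+1)^3$.

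First I would fix a prime $p \in [5,H]$ and analyze, for a given $h$, which residue classes for $p_h$ modulo $p$ are forbidden. Conditions \eqref{modpiH1} and \eqref{modpiH2} forbid at most a bounded number of classes ($p_h \equiv -2h$, and the solutions of $x(2x+1)^3 \equiv -2h$, at most $4$ of them). Conditions \eqref{deux} and \eqref{trois} involve pairs $(h,k)$; for those pairs with $k < h$ (already constructed) the prime $p_k$ is fixed, and since $p_k > 4H+1 > H \ge p$, the factor $p_k$ is automatically coprime to $p$ and imposes nothing, while $2p_k+1$ may or may not be divisible by $p$ — if it is, we get finitely many more forbidden classes for $p_h$ from $p_h \equiv 2(k-h)$ and from $p_h(2p_h+1)^3 \equiv 2(k-h) \pmod p$. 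For pairs with $k > h$ (not yet constructed), the roles are reversed: the constraint will be honoured when we later choose $p_k$, by forbidding the classes $p_k \equiv 2(k-h)$... wait, more precisely by ensuring $p_k(2p_k+1) $ is coprime to $p_h - 2(k-h)$ and to $p_h(2p_h+1)^3 - 2(k-h)$; but since by \eqref{orderp} we will have $p_k$ enormous compared to $p_h$, and $p_h - 2(k-h)$ has only boundedly many prime factors, we just need $p_k$ and $2p_k+1$ to avoid those finitely many primes, which is again finitely many residue conditions handled at stage $k$. The key point is that at every stage only finitely many residue classes modulo $P$ are excluded, and since $p$ is a \emph{fixed} prime $\ge 5$ while the progression's first term can still be chosen freely, there remains at least one admissible class coprime to $P$ in which to apply Lemma \ref{ginap} (the coprimality of both $b$ and $2b+1$ to $a = P$ being exactly what we arrange).

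The one genuine subtlety — and the step I expect to be the main obstacle — is a counting/compatibility check: I must verify that for each fixed prime $p \in [5,H]$ the number of residue classes modulo $p$ that get forbidden (across all the conditions touching the stage-$h$ prime) is strictly less than $p-2$ (we also lose the classes $0$ and, implicitly via needing $2b+1 \not\equiv 0$, the class $(p-1)/2$), so that a legal class always survives. Counting: \eqref{modpiH1} kills $1$ class, \eqref{modpiH2} kills $\le 4$, and the pair-conditions \eqref{deux}, \eqref{trois} kill $\le 1 + 4$ classes for \emph{each} $k < h$ with $p \mid 2p_k+1$; a priori that could be many $k$'s. The resolution is to note that $p \mid 2p_k+1$ forces $p_k \equiv (p-1)/2 \pmod p$, and we will have arranged at stage $k$ that $p_k$ avoids this class (indeed we insist $\gcd(2p_k+1, P) = 1$, which Lemma \ref{ginap} gives us for free since it produces $p_k$ with $2p_k+1$ also prime and larger than $H$, hence coprime to $P$). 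Therefore \emph{no} previously-constructed $p_k$ has $p \mid 2p_k+1$, the pair-conditions with $k<h$ impose nothing at all, and only $1 + 4 = 5$ classes are ever forbidden modulo $p$; since $p \ge 5$ this is not quite enough room, so one takes $H$ large — but the hypothesis is only $H \ge 5$. Hence the honest fix is sharper: reexamine \eqref{modpiH1}--\eqref{modpiH2}, observe $\gcd(p_h(2p_h+1)^3, p)=1$ already because $p_h$ and $2p_h+1$ are primes exceeding $H \ge p$, so \eqref{modpiH2}'s content is only $p_h(2p_h+1)^3 \not\equiv -2h$, i.e. (given invertibility) a single class, and likewise \eqref{trois} reduces to one class per bad $k$; combined with the observation above that there are no bad $k$, we forbid only the $2$ classes from \eqref{modpiH1} and \eqref{modpiH2} plus the class $0$ and the class $(p-1)/2$, totalling $\le 4 < p$ when... again tight at $p=5$. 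So the cleanest writeup will carefully merge overlapping forbidden classes and use that $2p_h+1$ being prime automatically gives $\gcd(p_h(2p_h+1)^3, P) = \gcd(p_h, P)\cdot\gcd(2p_h+1,P) = 1$; I would present the induction with all these reductions folded in, leaving the final class-count as the one computation to perform with care.
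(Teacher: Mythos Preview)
Your inductive scheme is the right shape, but there is a real gap in how you handle conditions \eqref{deux} and \eqref{trois} for pairs with $k<h$. Those conditions demand that $p_k$ and $2p_k+1$ do not divide $p_h-2(k-h)$ (respectively $p_h(2p_h+1)^3-2(k-h)$); these are congruence constraints on $p_h$ \emph{modulo the large primes} $p_k$ and $2p_k+1$, not modulo any $p\in[5,H]$. Your progression has common difference only $P=\prod_{5\le p\le H}p$, so you have no control over $p_h\bmod p_k$ or $p_h\bmod(2p_k+1)$, and the sentence ``the factor $p_k$ is automatically coprime to $p$ and imposes nothing'' conflates two unrelated moduli. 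Nor can size save you in this direction: $p_h-2(k-h)$ is of order $p_h\gg p_k$, so $p_k$ may well divide it. The paper's proof fixes this by taking, at stage $h+1$, an arithmetic progression whose common difference is $\Pi_H\cdot\prod_{k\le h}p_k(2p_k+1)$; for each earlier $k$ one must avoid at most $14$ residue classes modulo $p_k(2p_k+1)$ (coming from $r$, $2r+1$, $r-2(k-(h+1))$, and $r(2r+1)^3-2(k-(h+1))$), which is harmless since $p_k>4H+1\ge 21$. The size argument is then used only for the reversed pair, i.e.\ to ensure $p_{h+1}(2p_{h+1}+1)$ is coprime to the already-fixed integers $p_k-2((h+1)-k)$ and $p_k(2p_k+1)^3-2((h+1)-k)$.

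There is also a counting slip in your small-prime analysis: the relation $p_h(2p_h+1)^3\equiv -2h\pmod p$ is a quartic in $p_h$ and can have up to four roots, not one; ``invertibility'' of $p_h(2p_h+1)^3$ does not collapse it to a single forbidden class. Together with the classes forbidden by \eqref{modpiH1} and by the requirements $p\nmid p_h$, $p\nmid 2p_h+1$, this gives up to $7$ excluded classes modulo $p$, which only leaves room when $p\ge 11$. The paper treats $p=5$ and $p=7$ by an explicit hand check (exhibiting two disjoint candidate classes and observing that shifting by $2h$ cannot kill both), and you would need to do the same.
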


\noindent \textit{Proof of Proposition \ref{familyp}}\\

We let
$$
\Pi_H = \prod_{\substack{5 \le p \le H\\ p\text{ prime}}} p.
$$

We prove by induction that for any $h$ between $1$ and $H$, we can 
find Germain primes $p_1, p_2, \ldots, p_h$ such that
\begin{equation}
\label{orderpind}
p_1 > 4H+1 \;  \text{ and for any 
$\ell$ in } [1, h-1]\; \colon \; p_{\ell+1} > 3p_{\ell}(2p_{\ell}+1)^3 .
\end{equation}
Further, 
\begin{eqnarray}
\label{modpiHind1}
\forall \ell \in [1, h], \forall p \in [5, H] \colon \phantom{mmmm}
\gcd \left(2 \ell + p_{\ell}, ~p \right) &=& 1, \phantom{mm} \\
\label{modpiHind2}
\text{ and} \phantom{m}
 \gcd\left(2\ell + p_{\ell}(2p_{\ell}+1)^3, ~p\right)&=&1, \phantom{mm}\\
\label{klel1}
\forall k <  \ell \le h  \colon  \phantom{m}\!\! 
\gcd(p_{\ell} -2(k-\ell), ~p_k(2p_k+1)) &=&1, \phantom{mm}\\
\label{klel2}
\text{ and }\phantom{m}
\gcd( p_{\ell}(2p_{\ell}+1)^3- 2(k-\ell), ~p_k(2p_k+1))&=&1, \phantom{mm}\\
\label{llek1}
\forall k <  \ell \le h  \colon  \phantom{m}
\gcd(p_{k} -2(\ell -k), ~p_{\ell}(2p_{\ell}+1)) &=&1, \phantom{mm}\\
 \label{llek2}
\text{  and } \phantom{m}
 \gcd( p_{k}(2p_k+1)^3- 2(\ell - k), ~p_{\ell}(2p_{\ell}+1))&=&1. \phantom{mm}
\end{eqnarray}

\indent Construction of $p_{1}$. 

\smallskip
For $h =1$, conditions (\ref{klel1}) to (\ref{llek2}) are empty. 
Condition (\ref{orderpind}) will be
satisfied as soon as we know that there are infinitely many 
Germain primes satisfying (\ref{modpiHind1}) and (\ref{modpiHind2}).

For $p \in [11,  H]$, we can always find a residue class 
$r_1(p)$ modulo $p$ such that none of
$r_1(p)$, $2r_1(p) +1$, $2 + r_1(p)$ and 
$2  + r_1(p) (2r_1(p) + 1)^3$ are 
equivalent to $0$ modulo $p$. This is possible as we have to avoid at most  
$7 \,(=1+1+1+4)$ residue classes modulo $p$.  \\

This is also true for $p =5$ and $p=7$: indeed, for $p=5$, 
the set of residue classes $A_5 := \{1(5),~ 1\times (2 \times 1 +1)^3(5) \}$ is 
disjoint from the set of residue classes $B_5 := \{ 3(5), ~ 3\times (2 \times 3 +1)^3(5)\}$ 
and that for $p=7$, the set of residue classes 
$A_7 := \{ 1(7), ~1\times (2 \times 1 +1)^3(7) \}$ 
is disjoint from the set of residue classes 
$B_7 := \{2(7), ~2\times (2 \times 2 +1)^3(7) \}$.
Hence at least one of the sets among $A_5 + 2$ and $B_5 + 2$ 
consists of non-zero residue classes. Same applies to at least one of the
sets among $A_7 + 2$ and  $B_7 + 2$. \\

Having found suitable residue classes $r_1(p)$ for any prime 
$p \in [5, H]$, the Chinese remainder theorem permits us to find a positive 
integer $s(1)$ such that, 
for each prime $p \in [5, H]$, none of the numbers $s(1)$, $2s(1) +1$, 
$2+s(1)$ and $2+s(1)(2s(1)+1)^3$ is congruent to 
$0$ modulo $p$. Thus, by Lemma \ref{ginap} the arithmetic progression 
with difference $\Pi_H$ and first term $s(1)$ contains infinitely many 
Germain primes satisfying (\ref{modpiHind1}) and (\ref{modpiHind2}), 
and we can 
thus find such a prime satisfying also (\ref{orderpind}).\\

\indent We now apply induction to complete the proof of Proposition 1. 

Let us now assume that for some $h$ between $1$ and $H-1$ we have 
constructed a family of $h$ Germain primes satisfying relations 
(\ref{orderpind}) to (\ref{llek2}). Now we would like to construct $p_{h+1}$. It is enough to 
show that there exist infinitely many Germain primes $p_{\ell}$ satisfying 
the relations (\ref{modpiHind1}) to (\ref{llek2}) where $\ell$ and $h$ are 
replaced by $h+1$. Our new relations (\ref{llek1}) and (\ref{llek2}) are trivially 
satisfied as soon as $p_{h+1}$ is large enough. For each $\ell < h+1$, one can 
choose an integer $r_{h+1}(\ell)$  satisfying the following relations
\begin{eqnarray*}
\gcd \left(r_{h+1} (\ell),  ~p_{\ell} (2p_{\ell} + 1) \right) &=& 1, \\
\gcd \left(2r_{h+1}(\ell)+1, ~p_{\ell} (2p_{\ell}+1)\right) &=&1,\\
\gcd \left(r_{h+1}(\ell) -2(\ell - (h+1)),  ~p_{\ell}(2p_{\ell} + 1)\right) &=&1,\\
\gcd \left( r_{h+1}(\ell)(2r_{h+1}({\ell})+1)^3- 2(\ell - (h+1)),  
~p_{\ell}(2p_{\ell}+1)\right)&=&1. 
\end{eqnarray*}
This is possible as we need to avoid at most 14 residue classes modulo
$p_{\ell}(2p_{\ell}+1)$. 
Arguing as we did previously, we can find a positive integer $s(h+1)$ such that 
all the numbers $s(h+1)$, $2s(h+1) +1$, $2(h+1)+s(h+1)$ and 
$2 (h+1)+s(h+1)(2s(h+1)+1)^3$ are coprime to $\Pi_H$.\\
By the Chinese remainder theorem and Dickson's conjecture, there exist infinitely many 
Germain primes which satisfy the relations (\ref{modpiHind1}) to (\ref{klel2}), 
and we can choose one of them which is sufficiently large to satisfy 
also (\ref{orderpind}), (\ref{llek1}) and (\ref{llek2}); we call such a prime $p_{h+1}$.

\smallskip
This completes the proof of Proposition \ref{familyp}.
\qed

\bigskip

\textbf{Step 3}. Construction of an auxiliary polynomial $F$.\\

We consider the set $\{p_1, p_2, \dots, p_H\}$ introduced in
Proposition \ref{familyp} and for, $h \in [1, H]$, we define the integer $n_h$ by
\begin{equation}\label{defnh}
n_h = \begin{cases}
		2 p_h, & \text{ if } h \not\equiv 2 \pmod 3,\\
		2p_h (2p_h+1)^3 & \text{ if } h \equiv 2 \pmod 3.
		\end{cases}
\end{equation}
We notice that, thanks to relation (\ref{orderp}), the numbers $n_h/2$
as $h$ varies from $1$ to $H$ are pairwise coprime. We let
$$
U= 12 \Pi_H \prod_{h=1}^H n_h^5 
$$
and we select a positive integer $r$ such that
\begin{equation}\label{congr}
r \equiv \begin{cases}
		1 & \pmod{ 12 \Pi_H}\\
		1 - 4h & \pmod{ (n_h/2)^2}\; \text{ for all integer } h \in [1, H].
		\end{cases}
\end{equation}
For $h \in [1, H]$, we define the polynomial $F_h$ by
\begin{equation}\label{Fh}
F_h(t) = \frac{Ut+r-1+4h}{n_{h}} +1 
\end{equation}
and we let
$$
F = \prod_{h=1}^H F_h.
$$
It is readily seen that each $F_h$ is a linear polynomial with integer coefficients
and positive leading coefficient.\\

\textbf{Step 4}. The polynomial $F$ has no fixed prime divisor.\\

If $p$ does not divide $U$,  the congruence 
$F(t) \equiv 0 \pmod p$ 
has at most $H$ solutions in $\mathbb{Z}/p\mathbb{Z}$; 
but $p$ is larger than $H$ 
and thus $p$ is not a fixed divisor of $F$.

If $p$ divides $U$, then either $p \in [2, H]$ or $p= p_h, 2p_h +1$
for some $1 \le h \le H$. In this case, the relation
\begin{equation}\label{fix}
F(t) \equiv 0 \!\!\! \pmod p \iff
\prod_{h=1}^H \frac{r-1+4h}{n_h} + 1 \equiv 0 \!\! \! \pmod p. 
\end{equation}
Note that for any $h \in [1, H]$, we have
$r-1 + 4h \equiv 0 \!\! \pmod {n_h^2}$
and hence $p=2$ can never satisfy \eqref{fix} and also
$F_h(t) \equiv 1 \!\! \pmod {n_h}$. Now suppose that $p = p_h$ or $2p_h+1$  
divides $(r- 1 + 4k)/n_k + 1$ for some $k \ne h$. Then 
we have
$$ 
2(k-h) + \frac{n_k}{2} \equiv 0 \!\!\! \pmod p,
$$
a contradiction to the relations \eqref{deux} and \eqref{trois}.

Finally, if any prime $p \in [3, H]$ satisfies \eqref{fix}, then we have
$$
2h + \frac{n_h}{2} \equiv 0 \!\!\! \pmod {p},
$$
a contradiction to relations \eqref{modpiH1}, \eqref{modpiH2} 
and the fact that any Germain prime $p_G > 3$ is always 
congruent to $2$ modulo $3$ and that
$p_G(2p_G+1)^3$ is congruent to $1$ modulo $3$.

\smallskip
Thus, the polynomial $F$ has no fixed divisor.\\

\textbf{Step 5}. Finding integers satisfying (\ref{estrong}).\\

Since the polynomials $F_h$ have positive leading coefficients
and $F$ has no fixed prime divisor, 
Dickson's conjecture implies that one can find a positive integer $t_0$ such that for 
each $h \in[1, H]$ the value of the polynomials 
$F_h$ at $t_0$ are prime numbers. Let us write $q_h = F_h(t_0)$ and $M=Ut_0+r-1$.

We thus have
\begin{equation}\label{qnu}
q_h= \frac{M+4h}{n_h} +1.
\end{equation}
We notice that $n_h$ is a value of the $\varphi$ function, since $\varphi(2p_h+1)= 2p_h$ 
and $\varphi((2p_h+1)^4)=2p_h(2p_h+1)^3$. Let us write $\nu_h$ to be 
$2p_h+1$ or $(2p_h+1)^4$ so that $n_h = \varphi(\nu_h)$. We have for all $h$
\begin{equation}\notag
M+4h = \varphi(q_h)\varphi(\nu_h) = \varphi(q_h\nu_h),
\end{equation}
the last equality coming from the fact that, by construction, $q_h$ is a prime 
larger than $\nu_h$ and thus coprime with it.

Theorem \ref{strongmain} is thus proved.
\qed\\

\noindent \textbf{Remark 2.}\;  
The reader will easily notice that with our construction, the sequence $(m_h)_h$ is 
decreasing and that a similar proof would permit to show that for any permutation 
$\tau$ of the set $\{1, 2, \ldots, H\}$, one can find a sequence of integers $(m_h)_h$ 
satisfying Theorem \ref{strongmain} such that the sequence $(m_{\tau(h)})_h$ is decreasing.\\

\noindent \textbf{Remark 3.}\;
The use of Dickson's conjecture is not necessary in the construction of a suitable 
polynomial $F$. By using a sieve method, one can construct a polynomial 
$$
F(t) = \prod_{1 \le h \le H} \left(\frac{Ut+r-1+ 4h}{u_h}+1\right),
$$
such that each of its factors has integral coefficients, which has no fixed prime 
divisor and such that for each $h \in [1, H]$ the number $u_h+1$ is prime.\\

\section{Proof of Theorem \ref{main}}\label{three}

As mentioned in the introduction, Theorem \ref{strongmain} easily implies that the Banach density of the set $\mathcal{V}$ 
is at least $1/4$.\\

Let us show why the opposite inequality is a direct consequence of the following result of K. Ford, S. Konyagin 
and C. Pomerance.

\begin{thm}[Theorem 2 of \cite{FKP}]\label{fkp}
For any $\varepsilon > 0$ there exist such $m$ that at least $(1-\varepsilon)m$ residue classes $a (\operatorname{mod} 4m), 0 < a < 4m, a\equiv 2 (\operatorname{mod} 4)$ are totient free. 
\end{thm}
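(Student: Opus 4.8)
\medskip
\noindent\emph{Proof idea.}
The natural first move is to pin down which totients lie in the class $2\pmod 4$. Writing $v_2$ for the $2$-adic valuation, one checks that $v_2(\varphi(n))=1$ precisely when $n=p^{b}$ or $n=2p^{b}$ with $p\equiv 3\pmod 4$ and $b\ge 1$ (the case $n=4$, giving $\varphi(4)=2=\varphi(3)$, is already of this shape), so that
$$\{\,v\in\mathcal{V}\colon v\equiv 2\!\!\pmod 4\,\}=\{\,p^{b-1}(p-1)\colon p\equiv 3\!\!\pmod 4\text{ prime},\ b\ge 1\,\}.$$
Hence if $m$ is odd, the $m$ residue classes $a\pmod{4m}$ with $0<a<4m$ and $a\equiv 2\pmod 4$ are in bijection, via the Chinese Remainder Theorem, with the residues $\bar a\in\Z/m\Z$, and such a class is totient free exactly when $\bar a$ avoids
$$\Sigma_m:=\{\,p^{b-1}(p-1)\bmod m\colon p\equiv 3\!\!\pmod 4\text{ prime},\ b\ge 1\,\}.$$
Thus it suffices to produce, for each $\varepsilon>0$, an odd $m$ with $|\Sigma_m|\le\varepsilon m$; the at least $(1-\varepsilon)m$ residues $\bar a\notin\Sigma_m$ then give the required totient free classes.

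By Dirichlet's theorem there is, for every unit $\beta\in(\Z/m\Z)^\times$, a prime $p\equiv 3\pmod 4$ with $p\equiv\beta\pmod m$; so, up to the finitely many residues coming from primes $p\mid m$ (which one controls separately), $\Sigma_m$ equals the union over $\beta\in(\Z/m\Z)^\times$ of the sets $(\beta-1)\langle\beta\rangle$, where $\langle\beta\rangle$ is the cyclic group generated by $\beta$. Splitting off the exponent $b=1$, the contribution of the shifted primes $p-1$ is exactly $\{\beta-1\colon\beta\in(\Z/m\Z)^\times\}$, a set of cardinality $\varphi(m)$. Choosing $m$ so that $\varphi(m)/m<\varepsilon/2$ — which is possible, e.g.\ along primorials, since $\varphi(m)/m$ can be made arbitrarily small — disposes of this part.

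The crux is the contribution of the exponents $b\ge 2$, i.e.\ of the ``prime power'' totients. For a fixed $b$, reducing modulo each prime $q\mid m$ and using the Chinese Remainder Theorem, the set $\{\beta^{b-1}(\beta-1)\bmod m\}$ is the product over $q\mid m$ of the images of the polynomial map $\beta\mapsto\beta^{b-1}(\beta-1)$ on $\F_q^{\times}$. Each such image has size at most $q-1$, and away from the ``degenerate'' exponents $b\equiv 0,1\pmod{q-1}$ — where the image is $\F_q\setminus\{1\}$, respectively $\F_q\setminus\{-1\}$ — one expects a Weil–type bound of the form $\le(1-c)q$ with $c>0$ absolute (this is elementary in the first cases: e.g.\ $\{\beta^2-\beta\colon\beta\in\F_q\}$ has size $(q+1)/2$, and similarly for $\beta^3-\beta^2$). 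The degenerate exponents are harmless, since when $b$ is degenerate for every $q\mid m$ one gets only the two sets $\{\beta-1\}$ and $\{1-\beta^{-1}\}$, each of size $\varphi(m)$. Combining the per-prime estimates multiplicatively over $q\mid m$ and summing over the (periodic) exponent classes $b$, the plan is to show that, once the number of prime factors of $m$ is large enough, the total number of residues modulo $m$ produced in this way is $\le\varepsilon m/2$, whence $|\Sigma_m|\le\varepsilon m$.

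The hard part is precisely this last estimate: controlling, uniformly in the exponent $b$, how many residue classes modulo $m$ are occupied by the prime power totients $p^{b-1}(p-1)$. The naive coset bound $\sum_b\prod_{q\mid m}|\mathrm{image}_q|$ loses a factor comparable to the exponent of $(\Z/m\Z)^\times$, so the argument must exploit cancellation/overlap between the many cosets, and this is where the quantitative understanding of the distribution of values of $\varphi$ developed in \cite{FKP} (the same circle of ideas used there to estimate $V(x)$) is brought to bear; the choice of $m$ must also be balanced against the competing requirement $\varphi(m)/m<\varepsilon/2$. Everything else — the reduction mod $4$, the bijection of classes with $\Z/m\Z$, and the treatment of the shifted primes — is routine, and once $|\Sigma_m|\le\varepsilon m$ is secured for a suitable $m$ the theorem follows.
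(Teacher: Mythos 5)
First, a point of comparison: the paper does not prove this statement at all --- it is quoted verbatim as Theorem 2 of \cite{FKP} and used as a black box in Section \ref{three}. So the only question is whether your argument would actually establish the quoted result, and it would not, for the reason you yourself flag. Your reductions are sound: the identification of the totients in the class $2 \pmod 4$ with the numbers $p^{b-1}(p-1)$, $p \equiv 3 \pmod 4$, the CRT bijection of the classes $a \equiv 2 \pmod 4$ modulo $4m$ with $\mathbb{Z}/m\mathbb{Z}$ for odd $m$, and the disposal of the $b=1$ contribution by choosing $m$ with $\varphi(m)/m < \varepsilon/2$ are all correct. But the theorem is not reduced to ``routine'' steps plus a known estimate; it is reduced to precisely the estimate you do not prove, namely that the residues modulo $m$ occupied by $p^{b-1}(p-1)$ with $b \ge 2$ number at most $\varepsilon m/2$.

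Two concrete problems with the remaining plan. First, the per-prime bound ``$\mathrm{image}$ of $\beta \mapsto \beta^{b-1}(\beta-1)$ on $\mathbb{F}_q^{\times}$ has size $\le (1-c)q$ with $c>0$ absolute'' is only asserted (``one expects a Weil-type bound''), and your degenerate/non-degenerate dichotomy does not cover the exponents $b-1$ that reduce modulo $q-1$ to large or structured values (e.g.\ $b-1 \equiv (q-1)/d$ for small $d$), where the image is a union of multiplicative cosets shifted by $-1$ and no fixed-degree polynomial argument applies. Second, and more seriously, even granting such a saving at every $q \mid m$, you must sum over the exponent classes $b$ modulo $\lambda(4m)$, and you state explicitly that the naive bound loses a factor of that order; the ``cancellation/overlap between the many cosets'' needed to recover it is exactly the content of the theorem, and deferring it to ``the circle of ideas developed in \cite{FKP}'' is circular, since the statement being proved \emph{is} Theorem 2 of \cite{FKP}. (A smaller issue: the residues contributed by primes $p \mid m$ are not ``finitely many'' uniformly in $m$; their number grows with $\omega(m)$ and the multiplicative orders of those $p$, so this set also needs an explicit bound against $\varepsilon m$.) As it stands the proposal is a correct reduction plus an unproved main estimate, so the theorem is not established.
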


Since $\varphi(n)$ is even except when $n$ is equal to $1$ or $2$, Theorem \ref{fkp} implies that for any positive real number $\varepsilon$ less than $1$, we can find an $m$ such that at least $(3-\varepsilon)m$ residue classes $a (\operatorname{mod} 4m), 0 < a < 4m$ are totient free. For any  positive $H$ and $x$ we have 
\begin{eqnarray}\notag
\Card\{\mathcal{V}\cap(x, x+H]\} &\le& \left(\left\lfloor\frac{H}{4m}\right\rfloor +1 \right) (1 + \varepsilon) m\\
\notag
&\le& (1 + \varepsilon) \frac{H}{4} + 2m.
\end{eqnarray}
This last inequality and the definition of the Banach density (\ref{defdelta}) imply $\delta^*(\mathcal{V}) \le 1/4$.

\section{Acknowledgements}
The authors are thankful to Kevin Ford who drew their attention to the paper \cite{FKP}.\\
This work benefitted from the support of the SPARC project 445. The first 
author is thankful to the Institute of Mathematical Sciences, Chennai, 
and the Harish-Chandra Research Institute, Prayagraj, for 
providing excellent conditions for working. The first and the third author would
also like to thank the Indo-French program (IFPM) in mathematics. 
The third author would also like to thank MTR/2018/000201 and DAE number theory 
plan project for partial financial support.


\begin{thebibliography}{20}


\bibitem{DEP1} 
Mithun Kumar Das, Pramod Eyyunni and Bhuwanesh Rao Patil, 
{\em Sparse subsets of the natural numbers and Euler's totient function}, 
Proc. Indian Acad. Sci. (Math. Sci.), \textbf{129} (2019), no. 5, 
Paper No. 84, 25pp.


\bibitem{DEP2} 
Mithun Kumar Das, Pramod Eyyunni and Bhuwanesh Rao Patil, 
{\em Correction to : Sparse subsets of the natural numbers and Euler's 
totient function},  Proc. Indian Acad. Sci. (Math. Sci.), \textbf{130} (2020), 
no. 1, Paper No. 30, 2pp.


\bibitem{DIC} 
L. E. Dickson,  {\em A new extension of Dirichlet's theorem on prime numbers}, 
Messenger of Math. \textbf{33} (1904), 155--161.


\bibitem{FKP} 
K. Ford, S. Konyagin and C. Pomerance, 
{\em Residue classes free of values of Euler's function}, 
Number Theory in progress, vol. 2 (Zakopane - Ko\'{s}cielisko, 1997), 
805-812, de Gruyter, Berlin, 1999.

\end{thebibliography}
\end{document}